
\documentclass[conference]{IEEEtran}
\ifCLASSINFOpdf
\else
\fi

\usepackage{graphicx}
\usepackage{eurosym}
\usepackage{amssymb}
\usepackage{amsmath}
\usepackage{amsfonts}
\usepackage{epstopdf}
\usepackage{epsf,subfigure}
\usepackage{psfrag}
\usepackage{graphics}
\usepackage{color} 
\usepackage{cite}
\usepackage{array,multirow,pbox}
\usepackage{enumitem}

\newcommand{\myassum}[2]{
\begin{enumerate}[label=\textbf{A\arabic*}]
\setcounter{enumi}{\value{#1}}
#2
\setcounter{#1}{\value{enumi}}
\end{enumerate}}

\newcommand{\rem}[1]{}

\newtheorem{definition}{Definition}
\newtheorem{theorem}{Theorem}

\newtheorem{lemma}{Lemma}
\newtheorem{remark}{Remark}

\newenvironment{proof}[1][Proof]{\begin{trivlist}
\item[\hskip \labelsep {\bfseries #1}]}{\end{trivlist}}

\newcommand{\qed}{\nobreak \ifvmode \relax \else
      \ifdim\lastskip<1.5em \hskip-\lastskip
      \hskip1.5em plus0em minus0.5em \fi \nobreak
      \vrule height0.75em width0.5em depth0.25em\fi}

\DeclareGraphicsRule{.tif}{png}{.png}{`convert #1 `dirname #1`/`basename #1 .tif`.png}

\graphicspath{{Figures/}}

\newcommand{\mysubeq}[2]{
\begin{subequations}\label{#1}
\begin{align}
#2
\end{align}\end{subequations}}


\hyphenation{op-tical net-works semi-conduc-tor}

\begin{document}
%

\title{Assessment of Optimal Flexibility in Ensemble of Frequency Responsive Loads}



%
\author{\IEEEauthorblockN{Soumya Kundu,
Jacob Hansen, 
Jianming Lian and
Karan Kalsi
}
\IEEEauthorblockA{Optimization and Control Group\\
Pacific Northwest National Laboratory,
Richland, WA 99352, USA\\ Email: \{soumya.kundu,\,jacob.hansen,\,jianming.lian,\,karanjit.kalsi\}@pnnl.gov}
}


\maketitle

\begin{abstract}
Potential of electrical loads in providing grid ancillary services is often limited due to the uncertainties associated with the load behavior. A knowledge of the expected uncertainties with a load control program would invariably yield to better informed control policies, opening up the possibility of extracting the maximal load control potential without affecting grid operations. In the context of frequency responsive load control, a probabilistic uncertainty analysis framework is presented to quantify the expected error between the target and actual load response, under uncertainties in the load dynamics. A closed-form expression of an optimal demand flexibility, minimizing the expected error in actual and committed flexibility, is provided. Analytical results are validated through Monte Carlo simulations of ensembles of electric water heaters.
\end{abstract}


%
\IEEEpeerreviewmaketitle

\section{Introduction}
In the electrical power grid, any imbalance in the generation and load results in a change in the system frequency (excess supply increases frequency while excess demand reduces it). Grid operators employ various frequency control resources (e.g. speed governors, spinning reserves) that can act at different time-scales (e.g. sub-second to minutes) to arrest any changes in the frequency, and restore normalcy. With an increase in the penetration of renewable generation, the importance of adequate (and cost-effective) frequency response actions is expected to grow even further \cite{Strbac:2002,Short:2007}. Electrical loads can often provide a much faster, cleaner and less expensive alternative to the traditional frequency responsive resources. The use of demand flexibility for frequency response has been explored both in the academia and the industry \cite{Schweppe:1982,Kirby:2002,Callaway:2011, Kundu:2011PSCC, Perfumo:2012, Sinitsyn:2013, Mathieu:2013, Ma:13, Zhang:2013, Hao:14, Sanandaji:16}. 

Of particular interest to this article is the decentralized control of loads to provide primary frequency response. Traditionally generators are equipped with speed governors that use a `droop curve' to increase (/decrease) the mechanical torque into the generator rotor when the frequency is less (/greater) than desired, thereby increasing (/decreasing) the generator electrical power output. Because of the short time of response (of the order of 1-10\,s), any load control algorithm providing primary response is likely to be distributed (and, possibly, hierarchical) in nature, whereby each load in an ensemble monitors the grid frequency and decides to change its power consumption (e.g. switch on/off) autonomously. Note that a completely decentralized and autonomous response by the loads in an ensemble to a frequency event can easily lead to synchronization thereby causing potential instabilities in the grid. Thus often a hierarchical distributed control architecture is conceptualized in which a supervisor (e.g. a load aggregator) is tasked with dispersing the response of the loads across the ensemble so that some desirable collective behavior is attained. Dispersion of load response can be achieved in multiple ways, e.g. dispersion in time (by assigning to each load specific time-intervals in which to respond \cite{Moghadam:2013}), or dispersion in frequency (by assigning to each load specific frequency thresholds to respond to \cite{Lu:2006,Horst:2007,Molina_Garcia:2011,Lian:2016}). The frequency dispersion method has certain advantage as it can be designed to achieve certain power-frequency droop-like response thereby allowing easier integration of such frequency-responsive resources in the grid planning operation.

However, load behavior, unlike spinning reserves and other traditional frequency control resources, is usually uncertain and unpredictable. While spinning reserves are always ready to respond to frequency events, loads are expected to supply certain local demand, which determines their availability for frequency response. `Energy-driven' loads, such as any type of thermal loads (air-conditioners, electric water-heaters), for which the local demand is reliant on the energy consumption over a duration, offers greater flexibility and availability for frequency response. Availability of these loads to respond to frequency events is strongly influenced by their dynamics. A grid operator needs to be aware of such uncertainties regarding the load dynamic behavior in order to appropriately dispatch a mix of frequency responsive resources (spinning reserves, responsive load ensembles) during operations.  

In this article, we consider an ensemble of electric water-heaters offering frequency response services to the grid operator. In a hierarchical framework a load aggregator assigns to each participating device a frequency threshold in order to coordinate the ensemble response to frequency events. The goal of this article is to analyze the uncertainties in such schemes arising due to load dynamic behavior. Sec.\,\ref{S:system} describes the systems and the control problem. Sec.\,\ref{S:response} presents the a way to estimate the ensemble response under load uncertainties, while Sec.\,\ref{S:results} presents the numerical simulation results. Finally we conclude the article in Sec.\,\ref{S:concl}.

\section{System Description}\label{S:system}

\subsection{Load Model}
In this article, we consider electric water-heaters (EWHs) as the example of flexible loads providing frequency response. EWHs offer an attractive option for flexible demand response, due to several reasons, including 1) strong correlation between EWH demand profile and the usual daily load patterns; and 2) the relatively high percentage of domestic electrical loads that the EWHs represent. Depending on the requirements, the water temerature dynamics of an EWH can be modeled at varying details \cite{Xu:14}. For our purpose, it suffices to use the `one-mass' thermal model which assumes the temperature inside the water-tank is spatially uniform (valid when the tank is \textit{nearly} full or \textit{nearly} empty) \cite{Diao:2012}:
\begin{align}\label{E:x}
\dot{T}_w(t) &=-a(t)\,T_w(t)+b(s(t),t)\,,\\
\text{where, }a(t) &:=\frac{1}{C_w}\left(\dot{m}(t)\,C_p+W\right),\notag\\
\&~b(s(t),t)&:=\frac{1}{C_w}\left(s(t)\,Q_e+\dot{m}(t)\,C_p\,T_{in}(t)+W\,T_a(t)\right).\notag
\end{align}
$T_w$ denotes the temperature of the water in the tank, and $s(t)$ denotes a switching variable which determines whether the EWH is drawing power ($s(t)=1$ or `on') or not ($s(t)=0$ or `off'). The rest of the notations are listed in Table\,\ref{Tab:params}, along with (the range of) their typical values used in this paper. Unless otherwise specified, the parameters are assumed to be uniformly distributed in the given range of values, except the hot water-flow rate ($\dot{m}$) which is assumed to follow certain typical water draw profiles \cite{Diao:2012}. The state of the EWH (`on' or `off') is determined by the switching condition:
\begin{align}\label{E:s}
s(t^+)&=\left\lbrace \begin{array}{cl}
0\,, & \text{if }T_w(t)\geq T_{set}+\delta T/2\\
1\,, & \text{if }T_w(t)\leq T_{set}-\delta T/2\\
s(t)\,, & \text{otherwise}
\end{array}\right.,
\end{align}
where $T_{set}$ is the temperature set-point of the EWH with a deadband width of $\delta T$\,. The electric power consumed by the EWH is a function of its operational state, given by $s(t)P$ where $P$ denotes the (constant) power the EWH draws in its `on' state. 
Typical values of the parameters $T_{set},\,\delta T$ and $P$ are also listed in Table\,\ref{Tab:params}. 

\begin{table*}[thpb]
\caption{EWH Model Parameters}
\label{Tab:params}
\centering
\begin{tabular}{|*{4}{c|}}\hline
Parameter & Description & Value (Range) & Unit\\\hline
$T_a$ & room temperature &  $75\,\pm\,2.5$& [$^o$F] \\
$T_{in}$ & inlet water temperature &  $60\,\pm\,2.5$ & [$^o$F] \\
$T_{set}$ & temperature set-point & $130\,\pm\,5$ & [$^o$F]\\
$\delta T$ & width of temperature hysteresis deadband & $20$ & [$^o$F]\\
$C_w$ & thermal capacitance of the water in the tank &  $417.11$& [BTU/\,$^o$F] \\
$C_p$ & specific heat capacity of water &  $1$& [BTU/\,(lb-$^o$F)] \\
$W$ & thermal conductance of the tank shell &  $3\,\pm\,0.25$& [BTU/\,($^o$F-hr)] \\
$Q_e$ & heating capacity of the resistor &  $15360\,\pm\,1706$& [BTU/\,hr] \\
$\dot{m}$ & hot water flow rate & -- & [lb/\,hr]\\
$P$ & electric power consumed in `on' state & $4.5\,\pm\,0.5$ & [kW]\\\hline
\end{tabular}
\end{table*}

The EWH keeps switching between the two operational states - `on' and `off' - to maintain the water temperature within the specified temperature deadband. If the initial conditions and the parameters of the EWHs are randomly distributed, the amount of the time an EWH spends in the `on' state is also randomly distributed. 
\begin{definition}
Let us denote by $\Delta_{on}$  and $\Delta_{off}$ the random variables that represent the lengths of an `on' and `off' time-period, respectively.
\end{definition}

\subsection{Ensemble Frequency Response}
Consider an ensemble of $N$ number of EWHs, such that the $i$-th EWH, $i\!\in\!\lbrace 1,2,\dots,N\rbrace$\,, consumes power $s_i(t)\,P_i$ at any time $t$\,, where $s_i(t)$ and $P_i$ denote its switching variable and power rating, respectively. The total power consumed by the ensemble is 
\begin{align}
\forall t:~P_\Sigma^{}(t):={\sum}_{i=1}^Ns_i(t)\,P_i={\sum}_{\lbrace\forall i:s_i(t)=1\rbrace}P_i
\end{align}

When this ensemble commits to an under-frequency response, it is expected to decrease its power consumption by turning off some of its devices if the frequency falls. Since there are other frequency control mechanisms in-place, any such ensemble of loads will be expected to respond to events when the frequency is in a specific range. Thus a typical under-frequency response curve would look like Fig.\,\ref{F:droop}, where $\omega_u$ and $\omega_l$ denote the upper and lower limits of the frequency range assigned to the ensemble, and $\omega_0$ is the nominal frequency (60\,Hz). 
\begin{figure}[thpb]
\centering
\includegraphics[scale=0.3]{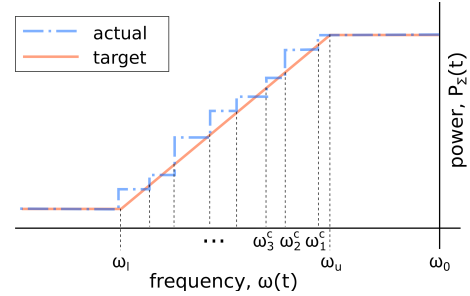}
\caption{Illustration of a power-frequency response curve.}
\label{F:droop}
\end{figure}
Clearly, $\omega_l<\omega_u\leq\omega_0\,.$ 
The target frequency response curve is a smooth line whose slope is determined based on the number (and power consumption) of the EWHs available to switch their states from `on' to `off'. The actual control is implemented by assigning frequency thresholds to each EWH, such that each EWH can turn `off' by monitoring the frequency on its own (see \cite{Molina_Garcia:2011,Lian:2016} for details). An over-frequency response policy can be constructed in a similar way. 

\begin{remark} From hereon, let us focus our discussion to the under-frequency response, with the understanding that an extension to the over-frequency would be trivial.
\end{remark}

In order to better explain the response policy, let us assume that any given time $t$\,, $\mathcal{S}_t=\lbrace d_1,\,d_2,\dots,\,d_{|\mathcal{S}_t|}\rbrace$ represents the set of indices of the `on' EWHs, while $|\mathcal{S}_t|$ denotes the number of `on' EWHs. Without any loss of generality, let us assume that the corresponding frequency thresholds $\left\lbrace \omega^c_i(t)\right\rbrace_{i=1}^{|\mathcal{S}_t|}$ are chosen in an ordered way so that,
\begin{align*}
\omega_l\leq\omega^c_{|\mathcal{S}_t|}(t)<\dots<\omega^c_2(t)<\omega^c_1(t)\leq\omega_u.
\end{align*} 
One possible way to choose the frequency thresholds to produce the target response curve in Fig.\,\ref{F:droop} is to assign 
\begin{align*}
\forall i\in\lbrace 1,\dots,|\mathcal{S}_t|\rbrace:~\omega^c_i(t):=\omega_u-\frac{\omega_u-\omega_l}{P_\Sigma^{}(t)}\sum_{j=1}^iP_{d_j}\,.
\end{align*}
The available EWHs obey the following response policy:
\begin{align}\label{E:response}
\forall i\in\lbrace 1,\dots,|\mathcal{S}_t|\rbrace:~s_{i}(t^+)&=0\,,~\text{ if }\omega(t)\leq\omega^c_i(t)\,,
\end{align}
where $s_{i}(t)=1\forall i\in\lbrace 1,\dots,|\mathcal{S}_t|\rbrace$\,. Note that we have ignored, for simplicity, any finite time-delay in the response. The total power consumption of the ensemble under this response policy, as illustrated in Fig.\,\ref{F:droop}, is given by,
\begin{align}
\forall t:~P_\Sigma(t^+)&=P_\Sigma^{}(t)-{\sum}_{\lbrace\forall d_i\in\mathcal{S}_t:\,\omega^c_i(t)\geq\omega(t)\rbrace}P_{d_i}\,.
\end{align}

The key point here is that the values of the frequency thresholds depend on the availability of the EWHs to turn `off' during a frequency event. However, continuous monitoring of the EWH states in an ensemble has high telemetry requirements, along with potential privacy concerns (for the EWH owners). A more viable option is to acquire and update the EWH states information once (at the start of) every fixed control time window, while using that information to estimate the availability of the responsive EWHs during the control window. Furthermore, if the control window is sufficiently short (say, 5-15\,min), the probability that there are more than one frequency events during a control window is negligibly small. Therefore, in this paper, we will focus on the scenario when the ensemble of EWHs will have to respond to at most one frequency event in each control window.

\subsection{Problem Statement}
Consider a control window $\mathcal{C}\!=\![t_0,\,t_f)$\,. At the start of the control window, $t=t_0\,,$ each EWH $i\in\lbrace 1,2,\dots,N\rbrace$ communicates to the load aggregator its power consumption $s_i(t_0)\,P_i$\,. Based on this information, the aggregator commits to the grid operator certain flexibility ($\overline{P_\Sigma^{}}$) for frequency response over the control window. For under-frequency response (Fig.\,\ref{F:droop}), this amounts to committing to reduce the aggregate power consumption by a maximum amount of $\overline{P_\Sigma^{}}$ over some frequency range $[\omega_l,\omega_u]$ in the form of a droop-curve. 

The objective of this paper is to determine the `optimal' value of the committed flexibility (denoted by $\overline{P_\Sigma^{}}^*$) which minimizes the maximal expectation of the squared relative difference between the actual and committed flexibility during a control window. Thus we seek the following:
\mysubeq{E:problem}{
\overline{P_\Sigma^{}}^*&:=\underset{\overline{P_\Sigma^{}}}{\arg\min}\left[\sup_{t\in\mathcal{C}}\mathbb{E}\left\lbrace \xi(t|t_0)^2\right\rbrace\right]\,,\\
\text{where, }\xi(t|t_0)&:=\frac{\left|P_\Sigma^{}(t)-\overline{P_\Sigma^{}}\right|}{\overline{P_\Sigma^{}}}\,.
}

\section{Optimal Flexibility}\label{S:response}

The set of EWHs that are in the `on'-state at the start of a control window $\mathcal{C}=[t_0,t_f)$ is given by $\mathcal{S}_{t_0}\subseteq\lbrace 1,2,\dots,N\rbrace$\,, with $|\mathcal{S}_{t_0}|$ being the number of `on' EWHs at $t=t_0$. Let us define the probability that a randomly selected EWH is `on' at any time $t\in\mathcal{C}$ by,
\begin{align}
\forall i:\quad p_{on}(t):=\text{Pr}\left(i\in\mathcal{S}_t\right)=\text{Pr}\left(s_i(t)=1\right)\quad\forall t\in\mathcal{C}\,.
\end{align}
Clearly, $p_{on}(t_0)=\left|\mathcal{S}_{t_0}\right|/N\,.$ Let us assume that,
\myassum{assum_list}{
\item\label{AS:similarity} The random variables representing the initial conditions and parameters of the EWHs are: 1) drawn from the same distribution, and 2) independent with each other. }
Based on \ref{AS:similarity} we argue that the natural (driven by unforced dynamics) `on' and `off' time-periods ($\Delta_{on}$ and $\Delta_{off}$\,, respectively) of each EWH in the ensemble follow the same probability density functions $f_{\Delta_{on}}\left(\cdot\right)$ and $f_{\Delta_{off}}\left(\cdot\right)$, respectively. Let us assume,
\myassum{assum_list}{
\item\label{AS:switching}
The length of the control window is sufficiently small such that the EWHs can change their state of operation at most only once during a control window.}
\begin{lemma}\label{L:pon} 
Probability that an EWH is `on' at $t\in\mathcal{C}$:
\begin{align}\label{E:pon}
\!\!p_{on}(t)\!=\!&\frac{|\mathcal{S}_{t_0}|}{N}\!-\!(t\!-\!t_0)\!\left[\alpha_{on}\frac{|\mathcal{S}_{t_0}|}{N}\!-\!\alpha_{off}\!\left(\!1\!-\!\frac{|\mathcal{S}_{t_0}|}{N}\!\right)\right],
\end{align}
where $\alpha_{on}\!:=\!\int_{t_f-t_0}^{\infty}\!\frac{f_{\Delta_{on}}(\tau)\,d\tau}{\tau}$ and $\alpha_{off}\!:=\!\int_{t_f-t_0}^{\infty}\!\frac{f_{\Delta_{off}}(\tau)\,d\tau}{\tau}\,$.
\end{lemma}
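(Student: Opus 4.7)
The plan is to apply the law of total probability to each EWH's state at $t_0$ and then evaluate the two resulting transition probabilities using Assumption~\ref{AS:switching}. By \ref{AS:similarity} every device is distributionally identical, so I would write
\begin{align*}
p_{on}(t) = \frac{|\mathcal{S}_{t_0}|}{N}\,q_{1\to 1}(t) + \left(1-\frac{|\mathcal{S}_{t_0}|}{N}\right)q_{0\to 1}(t),
\end{align*}
with $q_{1\to 1}(t):=\text{Pr}(s_i(t)=1\mid s_i(t_0)=1)$ and $q_{0\to 1}(t):=\text{Pr}(s_i(t)=1\mid s_i(t_0)=0)$. Once I establish $q_{1\to 1}(t)=1-(t-t_0)\,\alpha_{on}$ and $q_{0\to 1}(t)=(t-t_0)\,\alpha_{off}$, substituting these back and regrouping the terms carrying the factor $|\mathcal{S}_{t_0}|/N$ will immediately reproduce \eqref{E:pon}.

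To derive $q_{1\to 1}(t)$, I would condition on the length $\tau$ of the on-period that contains $t_0$, modelling $\tau$ as drawn from $f_{\Delta_{on}}$ and $t_0$ as uniformly placed within $[0,\tau]$. Assumption~\ref{AS:switching} implies that only periods with $\tau\geq t_f-t_0$ contribute to the single-switch regime, since any shorter period would force a second, off-to-on transition inside $\mathcal{C}$. For such $\tau$, the uniform placement makes the residual time to the next switch uniform on $[0,\tau]$, and the probability that this residual is at most $t-t_0$ equals $(t-t_0)/\tau$. Marginalising over $\tau$ then gives
\begin{align*}
1 - q_{1\to 1}(t) = \int_{t_f-t_0}^{\infty}\frac{t-t_0}{\tau}\,f_{\Delta_{on}}(\tau)\,d\tau = (t-t_0)\,\alpha_{on},
\end{align*}
and a symmetric argument with $f_{\Delta_{off}}$ in place of $f_{\Delta_{on}}$ yields $q_{0\to 1}(t)=(t-t_0)\,\alpha_{off}$.

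The step I expect to be the main obstacle is pinning down the two modelling conventions that make this clean linear-in-$(t-t_0)$ expression valid: (i) that the length of the period containing $t_0$ can be taken directly from $f_{\Delta_{on}}$ (rather than from the size-biased density a fully stationary inspection-paradox argument would suggest), with $t_0$ placed uniformly inside it, and (ii) that restricting the integral to $\tau\geq t_f-t_0$ is a faithful encoding of \ref{AS:switching}. Both amount to systematically discarding contributions from sample paths with more than one switch within $\mathcal{C}$ as well as from the short-$\tau$ tail of $f_{\Delta_{on}}$ and $f_{\Delta_{off}}$; once these omissions are justified at the level of approximation implied by Assumption~\ref{AS:switching}, the remainder of the proof is routine algebra.
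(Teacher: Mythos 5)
Your proposal follows essentially the same route as the paper: the same total-probability decomposition over the state at $t_0$, the same conditioning on the length $\tau$ of the period containing $t_0$ with $t_0$ placed uniformly inside it, and the same use of Assumption~\ref{AS:switching} to set $f_{\Delta_{on}}(\tau)=f_{\Delta_{off}}(\tau)=0$ for $\tau<t_f-t_0$, yielding $q_{1\to1}(t)=1-(t-t_0)\,\alpha_{on}$ and $q_{0\to1}(t)=(t-t_0)\,\alpha_{off}$ exactly as in the paper's proof. The size-biasing caveat you flag is real, but the paper makes the same modelling choice (sampling $\tau$ directly from $f_{\Delta_{on}}$ and asserting uniformity of the elapsed time from \ref{AS:similarity}), so your argument matches the published one.
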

\begin{proof}
As per \ref{AS:switching}, there can be at most only one switching (`on'-to-`off' or `off'-to-`on'). Thus $p_{on}(t)$ is given by,
\begin{align*}
&p_{on}(t_0)\,\text{Pr}\left(i\!\in\!\mathcal{S}_t\left|\,i\in\mathcal{S}_{t_0}\right.\!\right)\!+\!\left(1\!-\!p_{on}(t_0)\right)\text{Pr}\left(i\!\in\!\mathcal{S}_t\left|\,i\notin\mathcal{S}_{t_0}\right.\!\right)\\
&\!=p_{on}(t_0)\cdot\text{Pr}\left(s_i(\tilde{t})\!=\!1\,\forall \tilde{t}\!\in\!(t_0,t]\left|\,i\in\mathcal{S}_{t_0}\right.\!\right)  \\
&+\left(1\!-\!p_{on}(t_0)\right)\!\cdot\!\left(1\!-\!\text{Pr}\left(s_i(\tilde{t})\!=\!0\,\forall \tilde{t}\!\in\!(t_0,t]\left|\,i\notin\mathcal{S}_{t_0}\right.\!\right)\right),
\end{align*}
where the first term denotes the probability that an EWH has been `on' throughout the time interval $[t_0,\,t]$\,; while the second term denotes the probability that an EWH was `off' at the start but has switched once from `off'-to-`on' during the interval $(t_0,\,t)\,.$ 

Let us denote by $\tau_{0,i}$ the random variable representing the length of time the $i$-th EWH ($i\in\mathcal{S}_{t_0}$) had already spent in the `on' state at the start of the control window. Based on the assumption\,\ref{AS:similarity}, it can be argued that the conditional distribution of $\tau_{0,i}$\,, given the `on' period $\Delta_{on,i}=\tau$\,, is uniform over the length of the `on' period, i.e.
\begin{align*}
\forall i:~\text{Pr}\left(\tau_{0,i}\leq v\left|\,\Delta_{on,i}=\tau\right.\right)={v}/{\tau}\quad\forall v\in[0,\tau]
\end{align*}
Therefore $\text{Pr}\left(s_i(\tilde{t})\!=\!1\,\forall \tilde{t}\!\in\!(t_0,t]\left|\,i\in\mathcal{S}_{t_0}\right.\!\right)$ is given by, 
\begin{align*}
&\int_{t_0-t_f}^\infty\!\!\text{Pr}\left(\tau_{0,i}\!+\!t\!-\!t_0\leq\tau\left|\Delta_{on,i}\!=\!\tau\right.\right)\,f_{\Delta_{on}}(\tau)\,d\tau\\
&=\int_{t_0-t_f}^\infty\!\!\frac{\tau-t+t_0}{\tau}\,f_{\Delta_{on}}(\tau)\,d\tau=1-(t-t_0)\,\alpha_{on}\,,
\end{align*}
where the lower integral limit follows from \ref{AS:switching}, i.e. $f_{\Delta_{on}}(\tau)=0\,\forall \tau<t_f-t_0$\,. Following similar arguments, we can argue that the probability $\text{Pr}\left(s_i(\tilde{t})\!=\!0\,\forall \tilde{t}\!\in\!(t_0,t]\left|\,i\notin\mathcal{S}_{t_0}\right.\!\right)=1-(t-t_0)\,\alpha_{off}$\,. This completes the proof.
\hfill\qed
\end{proof}

\begin{remark}
Note that the probability of an EWH being `on' is affine in time, with a slope that depends on the values of $\alpha_{on}$ and $\alpha_{off}$\,. While in some cases, with the exact knowledge of the EWH models and parameters, it may be possible to calculate $\alpha_{on}$ and $\alpha_{off}$ analytically, it is likely that their values would be estimated based on measurements. In this paper, we will assume that $\alpha_{on}$ and $\alpha_{off}$ are estimated online. 
\end{remark}

\begin{lemma}\label{L:metric}
The expected squared relative error between the actual and committed flexibility, for any $t\in\mathcal{C}$\,, is given by
\begin{align}\label{E:metric}
\!\!\mathbb{E}\lbrace \xi(t|t_0)^2\rbrace &\!=\! \frac{N}{N\!-\!1}\left[1\!-\!\frac{\left<P^2\right>}{2\overline{P_\Sigma^{}}\left<P\right>}\!-\!p_{on}(t)(N\!-\!1)\frac{\left<P\right>}{\overline{P_\Sigma^{}}}\right]^2\notag\\
&\quad -\frac{N}{N\!-\!1}\left(1\!-\!\frac{\left<P^2\right>}{2\overline{P_\Sigma^{}}\left<P\right>}\right)^2+1\,,
\end{align}
where $\left<P\right>:=\mathbb{E}\left[P_i\right]\,\forall i$ and $\left<P^2\right>:=\mathbb{E}\left[P_i^2\right]\,\forall i$\,.
\end{lemma}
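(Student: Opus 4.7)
The plan is to expand the definition of $\xi(t|t_0)^2$ and reduce the problem to computing the first two moments of the aggregate power $P_\Sigma^{}(t)=\sum_{i=1}^N s_i(t)\,P_i$. Writing
\begin{align*}
\mathbb{E}\lbrace \xi(t|t_0)^2\rbrace &= \frac{\mathbb{E}[P_\Sigma^{}(t)^2]}{\overline{P_\Sigma^{}}^{\,2}}-\frac{2\,\mathbb{E}[P_\Sigma^{}(t)]}{\overline{P_\Sigma^{}}}+1,
\end{align*}
the entire calculation reduces to evaluating $\mathbb{E}[P_\Sigma^{}(t)]$ and $\mathbb{E}[P_\Sigma^{}(t)^2]$, after which the result is obtained by a purely algebraic rearrangement.

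For the first moment, I would appeal to \ref{AS:similarity} to treat the pairs $(s_i(t),P_i)$ as identically distributed across devices, and invoke Lemma \ref{L:pon} to identify $\mathbb{E}[s_i(t)]=p_{on}(t)$. Linearity of expectation, together with the (assumed) independence between the switching indicator and the power rating on each device, then yields $\mathbb{E}[P_\Sigma^{}(t)] = N\,p_{on}(t)\,\langle P\rangle$. For the second moment I would split the double sum $P_\Sigma^{}(t)^2 = \sum_i s_i^2 P_i^2 + \sum_{i\neq j}s_i s_j P_i P_j$ and exploit two facts: (i) $s_i\in\{0,1\}$ so $s_i^2=s_i$, giving diagonal contribution $N\,p_{on}(t)\,\langle P^2\rangle$; and (ii) the pairwise independence across distinct devices from \ref{AS:similarity}, giving off-diagonal contribution $N(N-1)\,p_{on}(t)^2\,\langle P\rangle^2$. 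Substituting these two moments back produces
\begin{align*}
\mathbb{E}\lbrace \xi(t|t_0)^2\rbrace = N(N-1)\,p_{on}(t)^2\,\frac{\langle P\rangle^2}{\overline{P_\Sigma^{}}^{\,2}} + N\,p_{on}(t)\,\frac{\langle P^2\rangle}{\overline{P_\Sigma^{}}^{\,2}} - 2N\,p_{on}(t)\,\frac{\langle P\rangle}{\overline{P_\Sigma^{}}} + 1.
\end{align*}

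The main obstacle is not the probabilistic reasoning but recognizing that the compact form \eqref{E:metric} is a disguised completed square. Setting $A := 1-\tfrac{\langle P^2\rangle}{2\,\overline{P_\Sigma^{}}\,\langle P\rangle}$ and $B := p_{on}(t)(N-1)\tfrac{\langle P\rangle}{\overline{P_\Sigma^{}}}$, the stated expression collapses via $(A-B)^2 - A^2 = B^2 - 2AB$, so that
\begin{align*}
\frac{N}{N-1}\bigl[(A-B)^2 - A^2\bigr] + 1 = \frac{N}{N-1}B^2 - \frac{2N}{N-1}AB + 1.
\end{align*}
I would then verify term-by-term that $\frac{N}{N-1}B^2$ reproduces the $N(N-1)\,p_{on}^2\,\langle P\rangle^2/\overline{P_\Sigma^{}}^{\,2}$ term and that $-\tfrac{2N}{N-1}AB$ reproduces exactly the mixed contribution $N\,p_{on}\langle P^2\rangle/\overline{P_\Sigma^{}}^{\,2} - 2N\,p_{on}\langle P\rangle/\overline{P_\Sigma^{}}$. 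This matches the expanded expression line-for-line and completes the proof.
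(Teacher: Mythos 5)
Your proposal is correct and follows essentially the same route as the paper's proof: expand $\mathbb{E}\{\xi(t|t_0)^2\}$ into the first two moments of $P_\Sigma(t)$, compute $\mathbb{E}\{P_\Sigma(t)\}=N\,p_{on}(t)\langle P\rangle$ and $\mathbb{E}\{P_\Sigma(t)^2\}=N\,p_{on}(t)\langle P^2\rangle+N(N-1)\,p_{on}(t)^2\langle P\rangle^2$ using $s_i^2=s_i$ and independence across devices, and finish by algebra. The only difference is that you make explicit the completed-square identity $(A-B)^2-A^2=B^2-2AB$ that the paper dismisses as ``simple algebraic manipulations,'' and your verification of it checks out.
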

\begin{proof}
Note that $\mathbb{E}\lbrace \xi(t|t_0)^2\rbrace\!=\!\frac{\mathbb{E}\lbrace P_\Sigma^{}(t)^2\rbrace}{\overline{P_\Sigma^{}}^2}-2\frac{\mathbb{E}\lbrace P_\Sigma^{}(t)\rbrace}{\overline{P_\Sigma^{}}}+1\,,$ where 
\begin{align*}
\mathbb{E}\lbrace P_\Sigma^{}(t)\rbrace &\!=\! {\sum}_{i=1}^N\mathbb{E}\lbrace s_i(t)\,P_i\rbrace\!=\!N\,p_{on}(t)\,\left<P\right>\,,\\
\&~\mathbb{E}\lbrace P_\Sigma^{}(t)^2\rbrace &\!=\!\mathbb{E}\left\lbrace  {\sum}_{i=1}^Ns_i(t)\,P_i^2+\!{\sum}_{i\neq j}s_i(t)s_j(t)P_iP_j\right\rbrace\\
&\!=\!N\,p_{on}(t)\left<P^2\right>+N(N\!-\!1)\,p_{on}(t)^2\left<P\right>^2.
\end{align*}
Here we use the fact that $s_i(t)^2=s_i(t)$\,. The rest of follows after simple algebraic manipulations.
\hfill\qed\end{proof}

\begin{lemma}
$\mathbb{E}\!\left\lbrace \xi(t|t_0)^2\right\rbrace\!\!\leq\!{\sup}_{t=\lbrace t_0,t_f^-\rbrace}\mathbb{E}\!\left\lbrace \xi(t|t_0)^2\right\rbrace\,\forall t\!\in\!\mathcal{C}$\,.
\end{lemma}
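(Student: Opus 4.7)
The plan is to combine Lemmas~1 and~2 to show that $\mathbb{E}\{\xi(t|t_0)^2\}$, viewed as a function of $t$ on $\mathcal{C}$, is a convex quadratic in $t$; a convex continuous function on an interval attains its supremum at an endpoint, which yields the claim.

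First I would substitute the affine form of $p_{on}(t)$ from Lemma~\ref{L:pon} into the formula of Lemma~\ref{L:metric}. Writing $p_{on}(t)=p_{on}(t_0)-(t-t_0)\beta$ where $\beta:=\alpha_{on}\,|\mathcal{S}_{t_0}|/N-\alpha_{off}(1-|\mathcal{S}_{t_0}|/N)$, the bracketed quantity $1-\tfrac{\left<P^2\right>}{2\overline{P_\Sigma^{}}\left<P\right>}-p_{on}(t)(N-1)\tfrac{\left<P\right>}{\overline{P_\Sigma^{}}}$ becomes an affine function of $t$. Squaring it gives a quadratic in $t$, and the remaining terms in \eqref{E:metric} are constants, so $\mathbb{E}\{\xi(t|t_0)^2\}$ is a quadratic polynomial in $t$ on $\mathcal{C}$.

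Second, I would check that the leading coefficient of this quadratic is $\tfrac{N}{N-1}\bigl((N-1)\tfrac{\left<P\right>}{\overline{P_\Sigma^{}}}\bigr)^2\beta^2\geq 0$, hence $\mathbb{E}\{\xi(t|t_0)^2\}$ is a convex (possibly degenerate) function of $t$. For a convex function on the closed interval $[t_0,t_f]$, the maximum is attained at one of the endpoints, i.e.\ $\max_{t\in[t_0,t_f]}\mathbb{E}\{\xi(t|t_0)^2\}=\max\{\mathbb{E}\{\xi(t_0|t_0)^2\},\,\mathbb{E}\{\xi(t_f|t_0)^2\}\}$.

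Finally, since $\mathcal{C}=[t_0,t_f)$ is half-open but the function is continuous in $t$, the supremum over $\mathcal{C}$ is $\max\{\mathbb{E}\{\xi(t_0|t_0)^2\},\,\lim_{t\to t_f^-}\mathbb{E}\{\xi(t|t_0)^2\}\}$, which matches the right-hand side of the stated inequality. I do not expect a real obstacle here; the only mildly subtle point is the passage to the left limit at $t_f$, which justifies the use of the notation $t_f^-$ in the statement.
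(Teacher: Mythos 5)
Your proposal is correct and follows essentially the same route as the paper: the paper likewise observes that $\partial^2\mathbb{E}\{\xi(t|t_0)^2\}/\partial t^2=\frac{2N(N-1)\left<P\right>^2}{\overline{P_\Sigma^{}}^2}\beta^2\geq 0$ (with your $\beta$), i.e.\ the expression is a convex quadratic in $t$, so its maximum over the control window occurs at $t_0$ or as $t\to t_f^-$. Your explicit treatment of the half-open interval and the left limit is a minor tidying of the same argument.
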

\begin{proof}
Note that ${\partial^2\mathbb{E}\left\lbrace \xi(t|t_0)^2\right\rbrace}/{\partial t^2}$ is given by:
\begin{align*}
\frac{2N(N\!-\!1)\left<P\right>^2}{\overline{P_\Sigma^{}}^2}\left(\alpha_{on}\frac{|\mathcal{S}_{t_0}|}{N}-\alpha_{off}\!\left(1\!-\!\frac{|\mathcal{S}_{t_0}|}{N}\right)\right)^2\geq 0\,.
\end{align*}
Therefore the maximum of $\mathbb{E}\left\lbrace \xi(t|t_0)^2\right\rbrace$ must occur either at $t=t_0$ or as $t\rightarrow t_f^-\,.$
\hfill\qed\end{proof}
We now present the main result of this paper:
\begin{theorem}\label{T:optimal}
In \eqref{E:problem}, the optimal value of $\overline{P_\Sigma^{}}$ is given by,
\begin{align}\label{E:solution}
\overline{P_\Sigma^{}}^*={\frac{\left<P^2\right>}{2\left<P\right>}+(N-1)\frac{(p_{on}(t_0)+p_{on}(t_f))}{2}\left<P\right>}{}.
\end{align}
\end{theorem}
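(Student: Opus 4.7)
\textbf{Proof plan for Theorem~\ref{T:optimal}.} The plan is to exploit the two preceding lemmas in sequence. Lemma~3 reduces the supremum in \eqref{E:problem} from the whole control window to the two-element set $\{t_0,\,t_f^-\}$ (since $\mathbb{E}\{\xi(t|t_0)^2\}$ is convex in $t$), so the objective becomes $\min_{\overline{P_\Sigma^{}}}\max\{E_0,E_f\}$, where $E_j(\overline{P_\Sigma^{}}):=\mathbb{E}\{\xi(t_j|t_0)^2\}$ for $j\in\{0,f\}$. Lemma~2 gives each $E_j$ explicitly in terms of $p_{on}(t_j)$ and $\overline{P_\Sigma^{}}$.

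Second, I would introduce the shorthand $M_j:=\tfrac{\langle P^2\rangle}{2\langle P\rangle}+(N-1)\langle P\rangle\,p_{on}(t_j)$ so that \eqref{E:metric} reads $E_j=\tfrac{N}{N-1}(1-M_j/\overline{P_\Sigma^{}})^2+K(\overline{P_\Sigma^{}})$, where the offset $K(\overline{P_\Sigma^{}})$ is the same for $j=0$ and $j=f$. The key structural observation is that $E_0-E_f$ therefore depends only on the two squared terms. This makes the minimax analysis essentially one-dimensional: the crossing set $\{E_0=E_f\}$ is determined by $(1-M_0/\overline{P_\Sigma^{}})^2=(1-M_f/\overline{P_\Sigma^{}})^2$, whose nontrivial root is $2-(M_0+M_f)/\overline{P_\Sigma^{}}=0$, giving $\overline{P_\Sigma^{}}^*=(M_0+M_f)/2$. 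Plugging in the definition of $M_j$ produces precisely the expression~\eqref{E:solution}.

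Third, I would verify that this crossing point is actually a \emph{minimum} of $\max\{E_0,E_f\}$, not merely a point of equality. A direct computation of $\partial E_j/\partial \overline{P_\Sigma^{}}$ from the explicit form above shows that at $\overline{P_\Sigma^{}}=(M_0+M_f)/2$ the two partial derivatives have opposite signs, since the bracketed factors $(1-M_j/\overline{P_\Sigma^{}})$ are equal in magnitude and opposite in sign there; hence the right-derivative of $\max\{E_0,E_f\}$ is nonnegative and the left-derivative is nonpositive, certifying the minimum by a standard KKT/subdifferential argument. Combined with the first step, this establishes $\overline{P_\Sigma^{}}^*$ as the solution of \eqref{E:problem}.

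The main obstacle I anticipate is precisely this optimality certification: the two-sided derivative check is clean when $\overline{P_\Sigma^{}}^*$ lies strictly between the unconstrained minimizers of $E_0$ and $E_f$, but one must rule out the degenerate possibility that the minimax is attained at an interior minimum of a single $E_j$ (i.e., where one branch dominates and is individually stationary). In the regime relevant to this paper—large ensemble $N$ and non-trivial change in $p_{on}$ across the control window—the ordering of critical points makes this a short calculation, but a careful proof must handle the edge case $p_{on}(t_0)=p_{on}(t_f)$ (in which $E_0\equiv E_f$ and any $\overline{P_\Sigma^{}}$ minimizing the common quadratic suffices, with \eqref{E:solution} reducing accordingly).
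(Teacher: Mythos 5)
Your proposal follows essentially the same route as the paper: reduce via Lemma~3 to $\min_{\overline{P_\Sigma^{}}}\max\{E_0,E_f\}$, locate the nontrivial root of $E_0=E_f$, and certify that this equalizer is the minimax point; your $(M_0+M_f)/2$ is exactly \eqref{E:solution}. The one place your reasoning slips is the sign argument in the certification step: $\partial E_j/\partial\overline{P_\Sigma^{}}$ is \emph{not} proportional to the bracketed factor $(1-M_j/\overline{P_\Sigma^{}})$, because the common offset $K(\overline{P_\Sigma^{}})$ has nonzero derivative and the chain rule contributes differing multipliers $M_j/\overline{P_\Sigma^{}}^{2}$. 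The clean computation, starting from $\mathbb{E}\{\xi^2\}=\mathbb{E}\{P_\Sigma^2\}/\overline{P_\Sigma^{}}^{2}-2\,\mathbb{E}\{P_\Sigma\}/\overline{P_\Sigma^{}}+1$, gives
\begin{align*}
\frac{\partial E_j}{\partial \overline{P_\Sigma^{}}}=\frac{2\,\mathbb{E}_{t_j}\{P_\Sigma\}}{\overline{P_\Sigma^{}}^{3}}\left(\overline{P_\Sigma^{}}-\overline{P_\Sigma^{}}^{t_j}\right),\qquad \overline{P_\Sigma^{}}^{t_j}=M_j+\frac{\langle P^2\rangle}{2\langle P\rangle},
\end{align*}
so the two derivatives have opposite signs at $(M_0+M_f)/2$ if and only if that point lies strictly between the two unconstrained minimizers $\overline{P_\Sigma^{}}^{t_0}$ and $\overline{P_\Sigma^{}}^{t_f}$, which works out to $(N-1)\,|p_{on}(t_f)-p_{on}(t_0)|>\langle P^2\rangle/\langle P\rangle^2$. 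When this inequality fails, the equalizer sits below both minimizers, both branches are still decreasing there, and the minimax is instead attained at the unconstrained minimizer of the dominating branch rather than at \eqref{E:solution}. To your credit, you flag exactly this obstruction as the main gap and correctly identify the "strictly between the minimizers" condition as what must be checked; note that the paper's own Case~3 silently makes the same assumption (it takes for granted that $E_0$ and $E_f$ cross inside the interval between $\overline{P_\Sigma^{}}^{t_0}$ and $\overline{P_\Sigma^{}}^{t_f}$), so your proposal is no less rigorous than the published argument. A fully airtight proof of the theorem as stated would either add the hypothesis above (which holds for the paper's numerical regimes with $N\gtrsim 10$) or treat the complementary regime separately.
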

\begin{proof}
We prove this by showing that the optimal solution of \eqref{E:problem} is attained when, for some $\overline{P_\Sigma^{}}$\,, the values of $\mathbb{E}\lbrace\xi(t|t_0)^2\rbrace$ at $t=t_0$ and $t=t_f^-$ are same. In order to see that, first note that twice differentiating $\mathbb{E}\lbrace\xi(t|t_0)^2\rbrace$ w.r.t. $\overline{P_\Sigma^{}}$\,, we get
\begin{align}\label{E:dEdP}
\frac{\partial^2\mathbb{E}\left\lbrace \xi(t|t_0)^2\right\rbrace}{\partial \overline{P_\Sigma^{}}^2}=\frac{2\,N\,p_{on}(t)\left<P\right>}{\overline{P_\Sigma^{}}^3}>0\,.
\end{align} 
For every $t$\,, there exists a unique value of $\overline{P_\Sigma^{}}$ such that,
\begin{align*}
\forall t:~\overline{P_\Sigma^{}}^t&:=\underset{\overline{P_\Sigma^{}}}{\arg\min}\left[\mathbb{E}\left\lbrace \xi(t|t_0)^2\right\rbrace\right]\\
&={\frac{\left<P^2\right>}{\left<P\right>}\!+\!(N\!-\!1)\,p_{on}(t)\!\left<P\right>}{}.
\end{align*}
Also note from Lemma\,\ref{L:pon} that $p_{on}(t)$ is affine in $t$\,, implying that the values of $\overline{P_\Sigma^{}}^t$ are distinct for every $t$\,. 

We will now argue that, for any choice of $\overline{P_\Sigma^{}}$ so that $\mathbb{E}\lbrace\xi(t_0|t_0)^2\rbrace\neq \mathbb{E}\lbrace\xi(t_f^-|t_0)^2\rbrace$\,, there exists a better choice of $\overline{P_\Sigma^{}}$\,. To do that, let us consider the following scenarios:
\begin{itemize}
\item \textsc{Case 1:} $\overline{P_\Sigma^{}}<\min\lbrace \overline{P_\Sigma^{}}^{t_0}\!,\,\overline{P_\Sigma^{}}^{t_f^-}\rbrace$\,. From \eqref{E:dEdP}, we see that as we increase $\overline{P_\Sigma^{}}$ towards $\min\lbrace \overline{P_\Sigma^{}}^{t_0}\!,\,\overline{P_\Sigma^{}}^{t_f^-}\rbrace$\,, the values of both $\mathbb{E}\lbrace\xi(t_0|t_0)^2\rbrace$ and $\mathbb{E}\lbrace\xi(t_f^-|t_0)^2\rbrace$ decrease monotonically. Therefore optimal value of $\overline{P_\Sigma^{}}$ must be at least as large as $\min\lbrace \overline{P_\Sigma^{}}^{t_0}\!,\,\overline{P_\Sigma^{}}^{t_f^-}\rbrace$\,.
\item \textsc{Case 2:} $\overline{P_\Sigma^{}}>\max\lbrace \overline{P_\Sigma^{}}^{t_0}\!,\,\overline{P_\Sigma^{}}^{t_f^-}\rbrace$\,. In a similar way as above, using \eqref{E:dEdP}, we can argue that the optimal value of $\overline{P_\Sigma^{}}$ cannot be larger than $\max\lbrace \overline{P_\Sigma^{}}^{t_0}\!,\,\overline{P_\Sigma^{}}^{t_f^-}\rbrace$\,.
\item \textsc{Case 3:} $\min\lbrace \overline{P_\Sigma^{}}^{t_0}\!,\,\overline{P_\Sigma^{}}^{t_f^-}\rbrace\leq\overline{P_\Sigma^{}}\leq\max\lbrace \overline{P_\Sigma^{}}^{t_0}\!,\,\overline{P_\Sigma^{}}^{t_f^-}\rbrace$\,. In such scenarios, if we increase (or, decrease) $\overline{P_\Sigma^{}}$\, the values of $\mathbb{E}\lbrace\xi(t_0|t_0)^2\rbrace$ and $\mathbb{E}\lbrace\xi(t_f^-|t_0)^2\rbrace$ change in the opposite directions. Because of continuity of the functions, therefore, we argue that the optimal value of $\overline{P_\Sigma^{}}$ lies between $\min\lbrace \overline{P_\Sigma^{}}^{t_0}\!,\,\overline{P_\Sigma^{}}^{t_f^-}\rbrace$ and $\max\lbrace \overline{P_\Sigma^{}}^{t_0}\!,\,\overline{P_\Sigma^{}}^{t_f^-}\rbrace$\,, for which $\mathbb{E}\lbrace\xi(t_0|t_0)^2\rbrace=\mathbb{E}\lbrace\xi(t_f^-|t_0)^2\rbrace$\,.
\end{itemize}
Based on the arguments above, the value of $\overline{P_\Sigma^{}}^*$ in \eqref{E:problem} is obtained by solving the following equation,
\begin{align*}
&\mathbb{E}\lbrace\xi(t_0|t_0)^2\rbrace=\mathbb{E}\lbrace\xi(t_f^-|t_0)^2\rbrace\\
\implies & \left[1\!-\!\frac{\left<P^2\right>}{2\overline{P_\Sigma^{}}^*\left<P\right>}\!-\!p_{on}(t_0)(N\!-\!1)\frac{\left<P\right>}{\overline{P_\Sigma^{}}^*}\right]^2\\
&\quad=\left[1\!-\!\frac{\left<P^2\right>}{2\overline{P_\Sigma^{}}^*\left<P\right>}\!-\!p_{on}(t_f)(N\!-\!1)\frac{\left<P\right>}{\overline{P_\Sigma^{}}^*}\right]^2\\
\implies &\overline{P_\Sigma^{}}^*={\frac{\left<P^2\right>}{2\left<P\right>}+(N-1)\frac{(p_{on}(t_0)+p_{on}(t_f))}{2}\left<P\right>}{}.
\end{align*}
This completes the proof.
\hfill\qed\end{proof}

\section{Numerical Results}\label{S:results}



Let us make some observation regarding the effect of water-flow rates on the dynamics. Since the inlet water temperature is lower than the temperature of the water in the tank (Table\,\ref{Tab:params}), the time an EWH spends in the `on' state increases as the inlet water-flow rate increases. In fact, if the water-flow rate is high enough then the temperature in the water-tank decreases even when in the `on' state, and is likely to fall below the hysteresis deadband $[T_{set}-\delta T/2,\,T_{set}+\delta T/2]$. Typical water-heater usage profiles, as used in \cite{Diao:2012}, are shown in Fig.\,\ref{F:waterflow}).
\begin{figure}[thpb]
\centering
\includegraphics[scale=0.35]{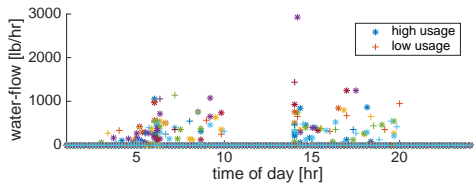}
\caption{Typical daily inlet water-flow profiles (high and low usage).}
\label{F:waterflow}
\end{figure}
\begin{figure*}[thpb]
\centering
\subfigure[$N=10\,,\,p_{on}(0)=1$]{
\includegraphics[scale=0.285]{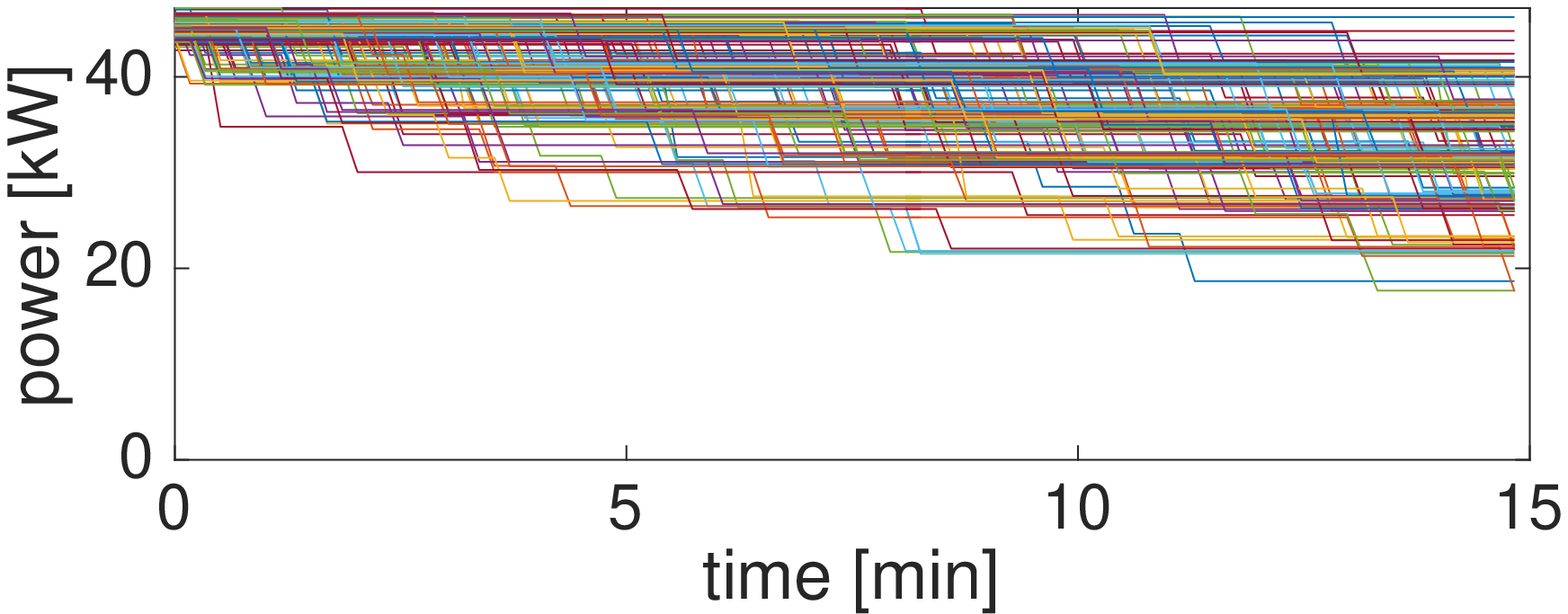}\label{F:10_100}
}\hspace{0.001in}
\subfigure[$N=200\,,\,p_{on}(0)=1$]{
\includegraphics[scale=0.285]{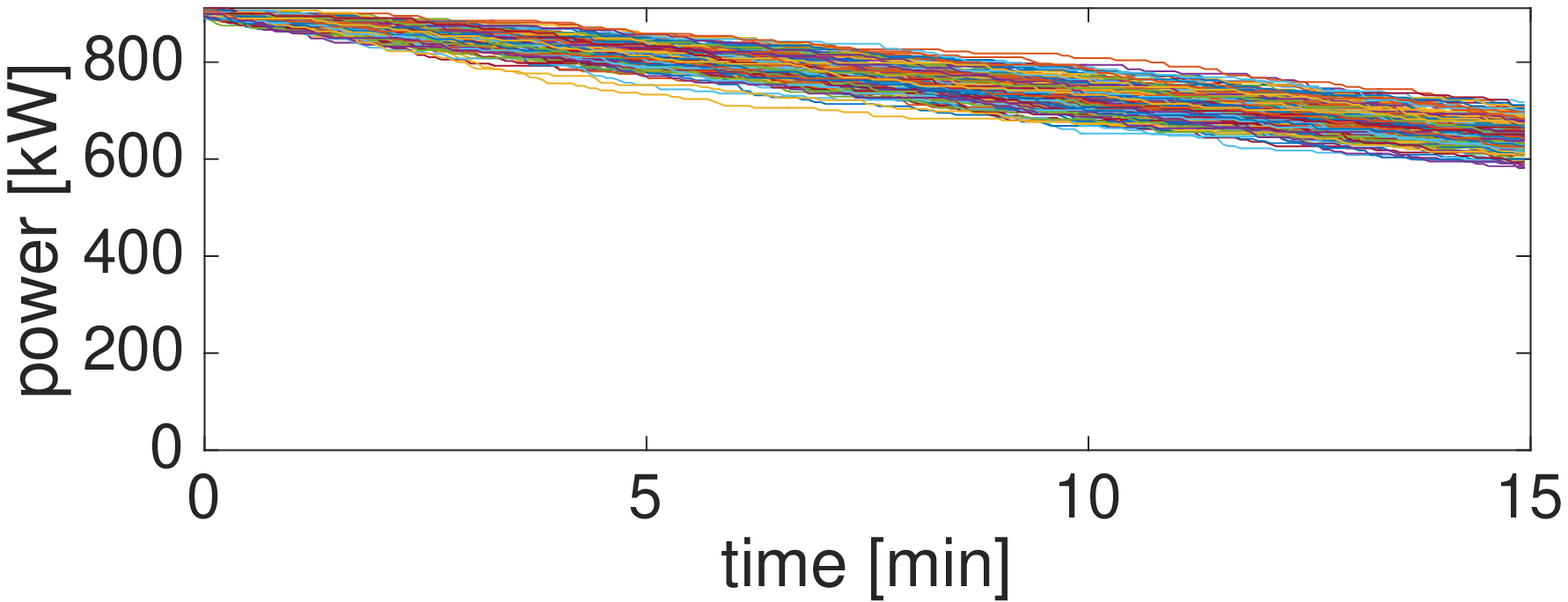}\label{F:200_100}
}\hspace{0.001in}
\subfigure[$N=1000\,,\,p_{on}(0)=1$]{
\includegraphics[scale=0.285]{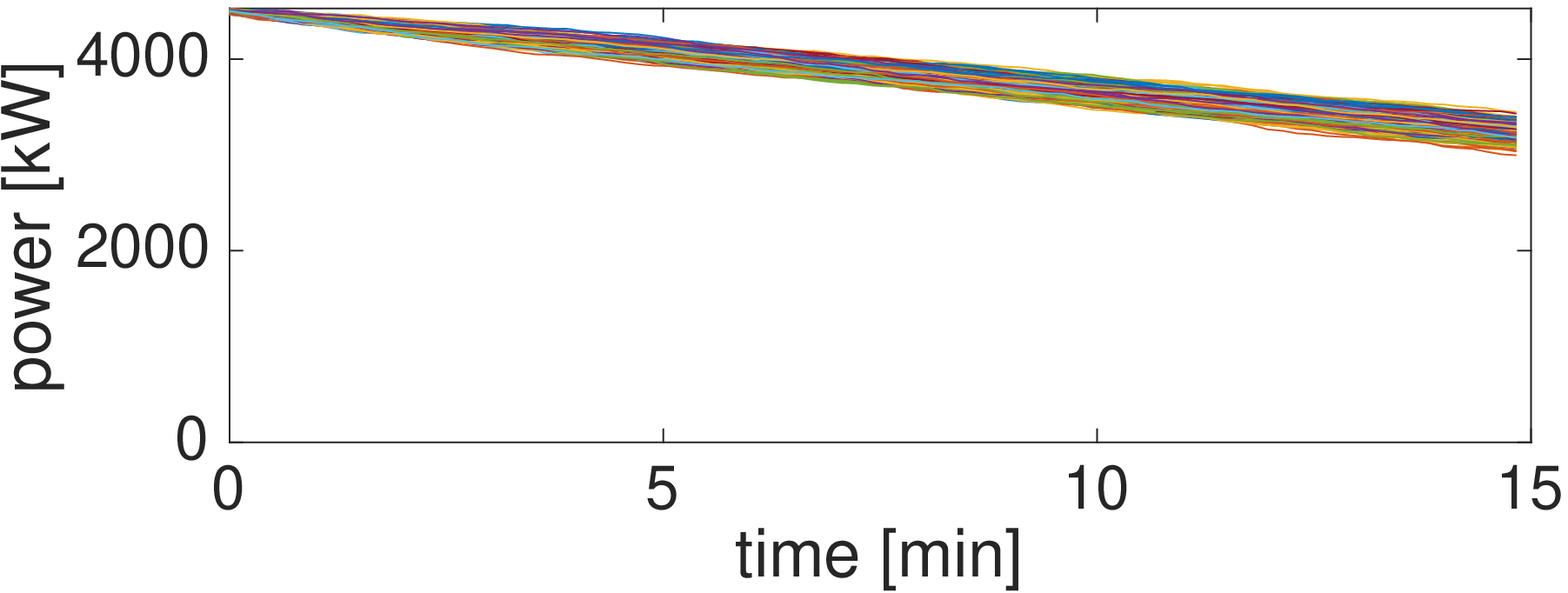}\label{F:1000_100}
}\hspace{0.001in}
\subfigure[$N=50\,,\,p_{on}(0)=1$]{
\includegraphics[scale=0.285]{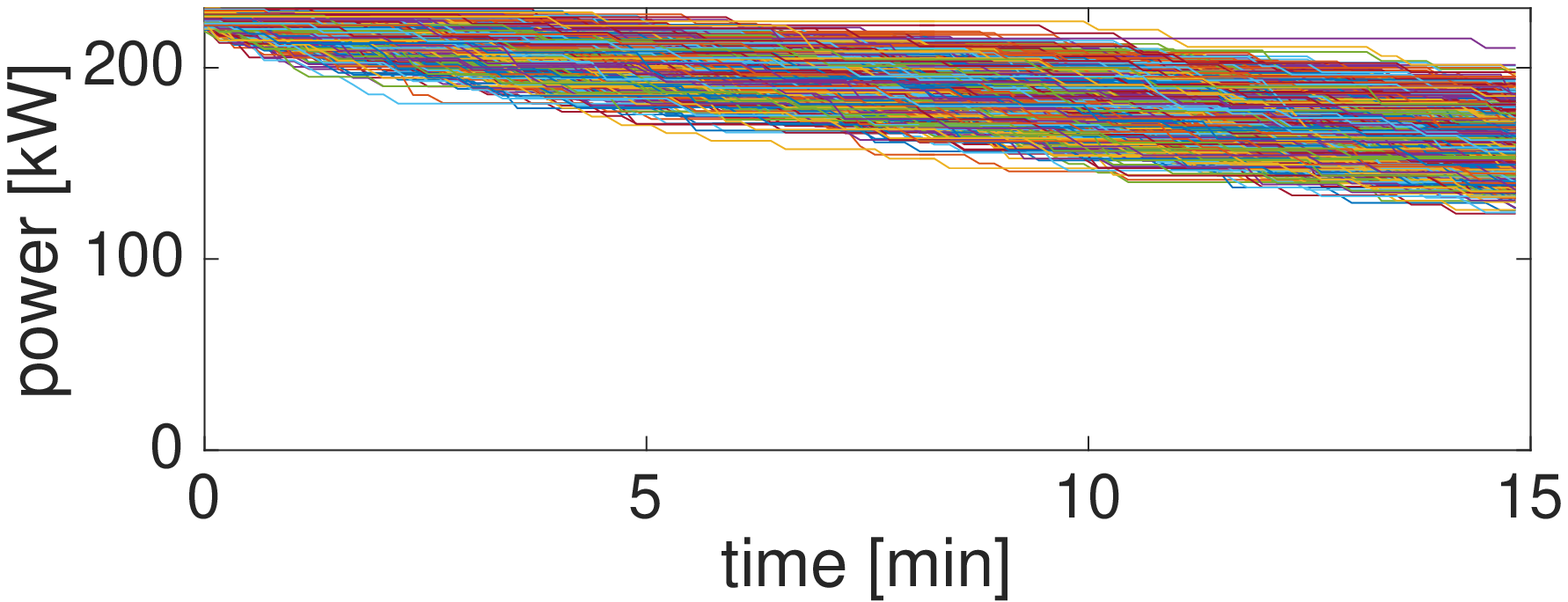}\label{F:50_100}
}\hspace{0.001in}
\subfigure[$N=50\,,\,p_{on}(0)=0.65$]{
\includegraphics[scale=0.285]{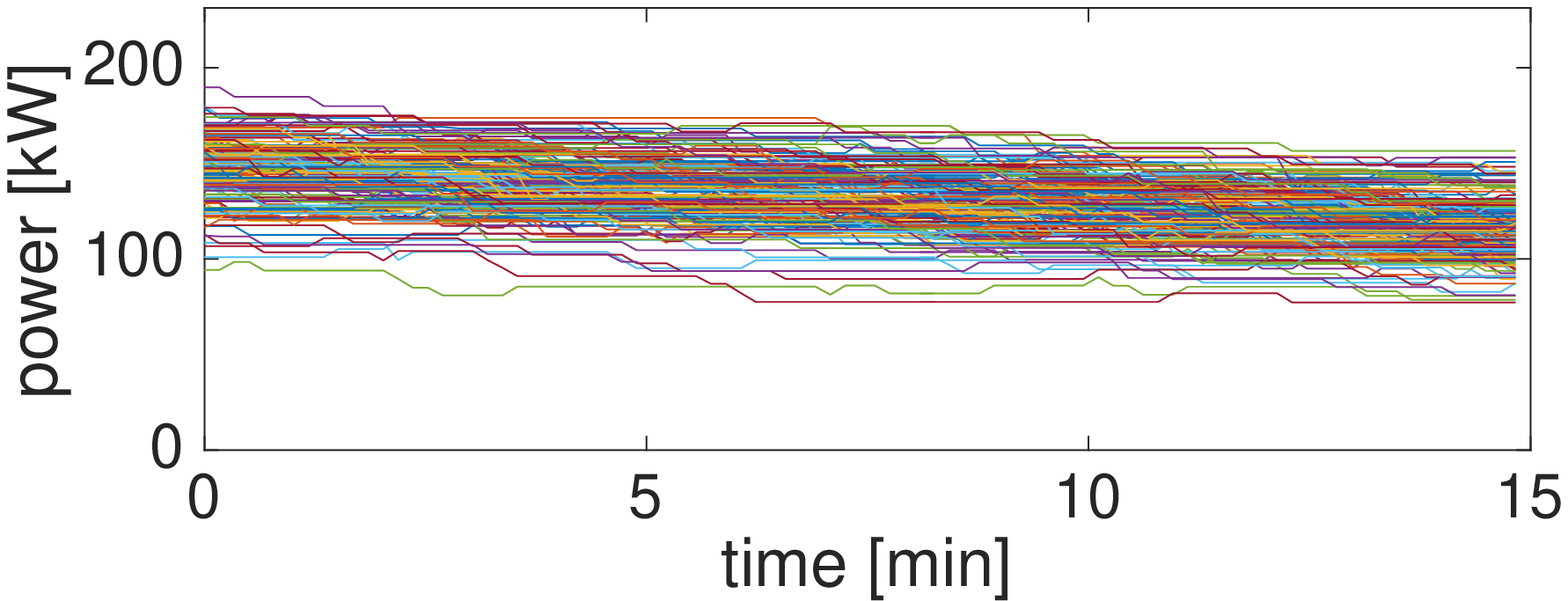}\label{F:50_65}
}\hspace{0.001in}
\subfigure[$N=50\,,\,p_{on}(0)=0.3$]{
\includegraphics[scale=0.285]{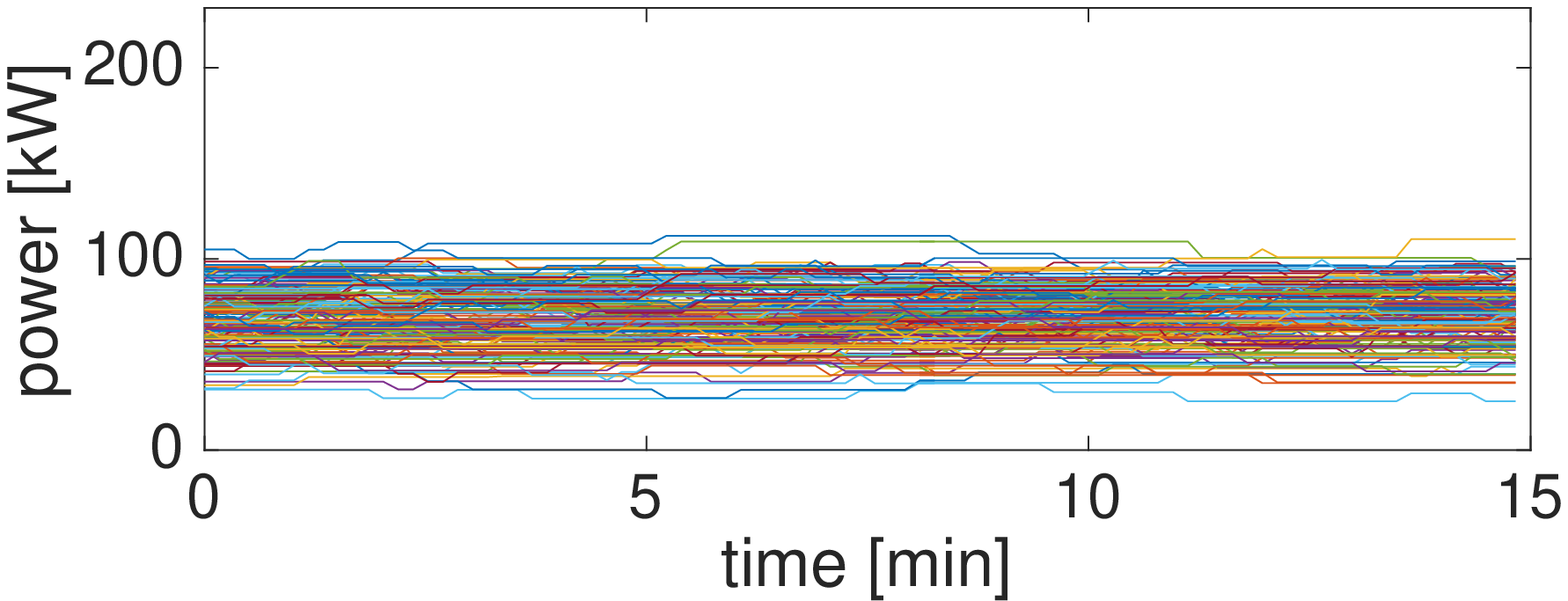}\label{F:50_30}
}\caption[Optional caption for list of figures]{Power consumption by ensembles of varying size (10,\,50,\,200,\,1000), with different initial fractions of `on' loads (0.3,\,0.65,\,1), for 200 instances each.}
\label{F:power}
\end{figure*}
\begin{figure*}[thpb]
\centering
\subfigure[$N=10\,,\,p_{on}(0)=1$]{
\includegraphics[scale=0.285]{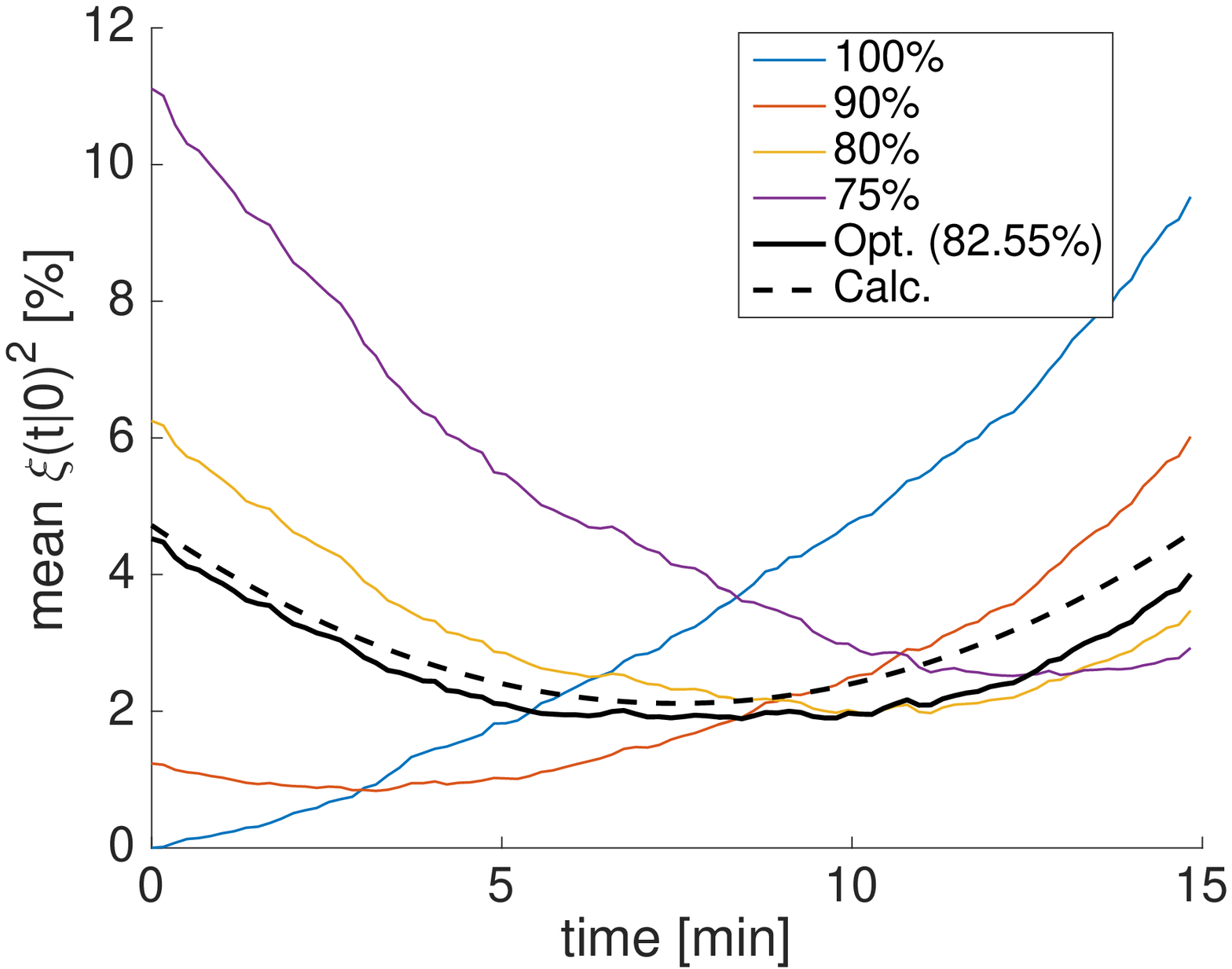}\label{F:error_10}
}\hspace{0.001in}
\subfigure[$N=100\,,\,p_{on}(0)=0.65$]{
\includegraphics[scale=0.285]{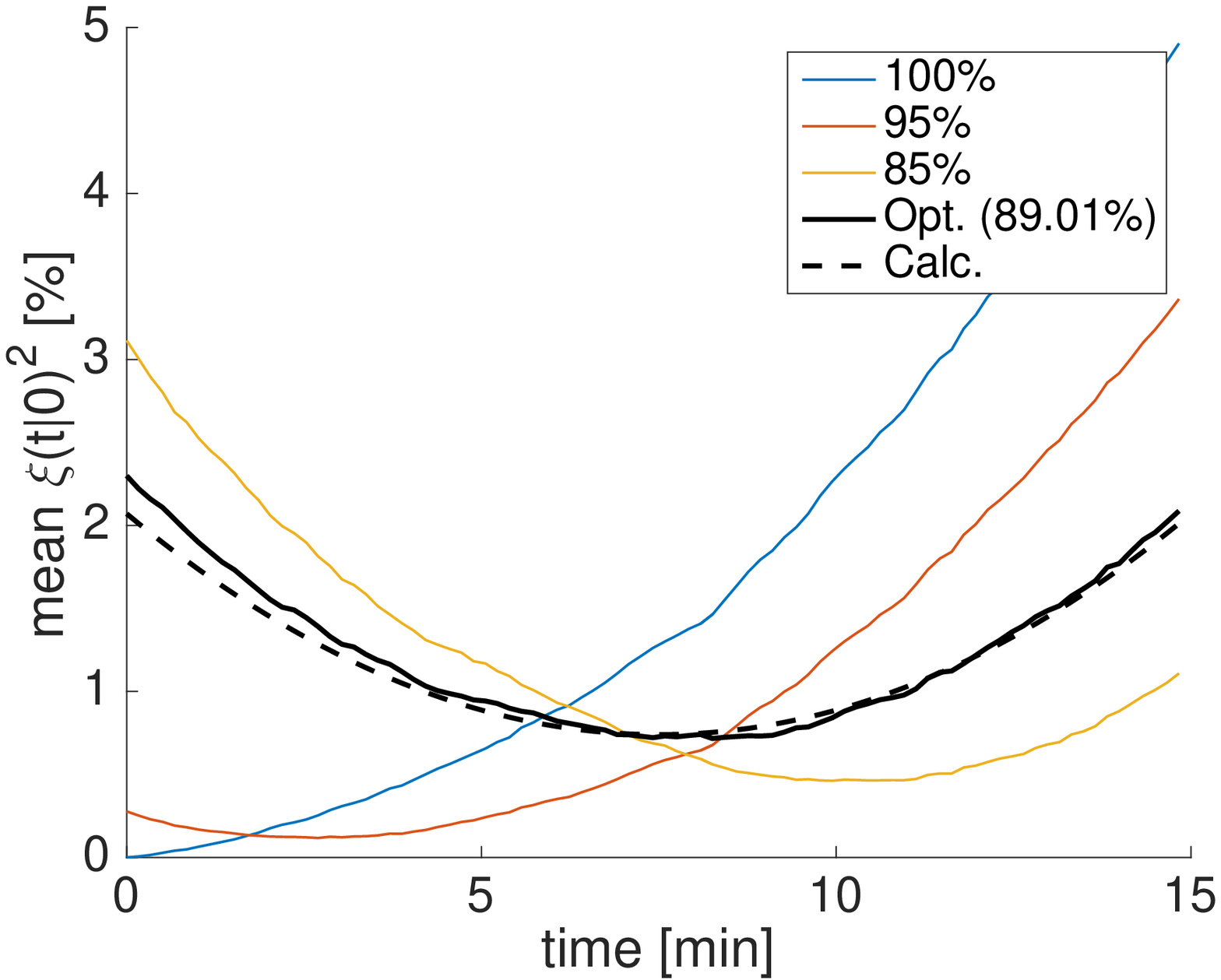}\label{F:error_100}
}\hspace{0.001in}
\subfigure[$N=1000\,,\,p_{on}(0)=1$]{
\includegraphics[scale=0.285]{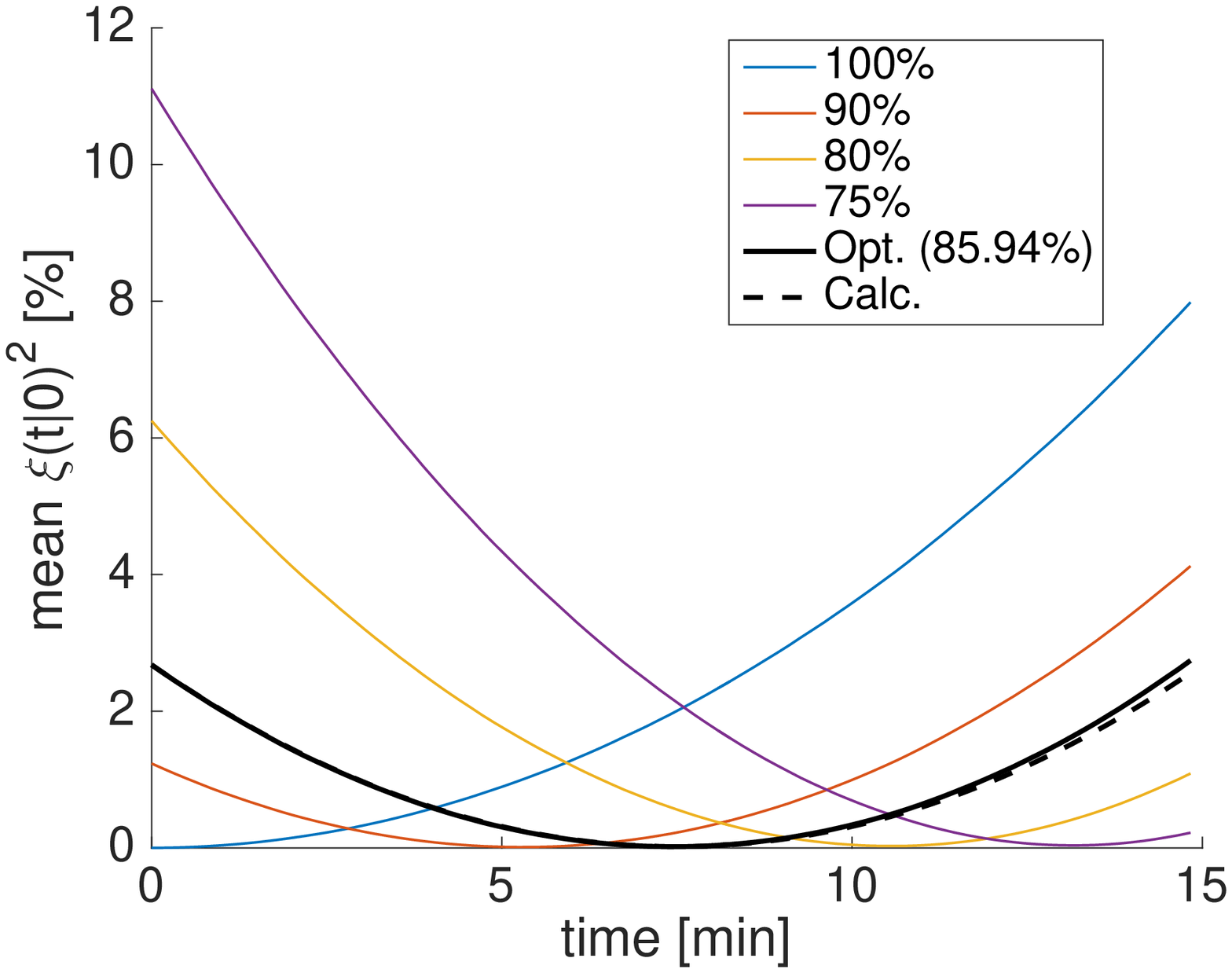}\label{F:error_1000}
}
\caption[]{Expected squared relative error, w.r.t. different levels of committed flexibility, compared with the optimal value of the flexibility commitment.}
\label{F:error}
\end{figure*}
For most of times (94.6\%) the water-flow rates are zero (less than 50\,lb/hr for 95\% of the time). Furthermore, the high water-flow rates are usually intermittent and do not sustain for long. Thus it is reasonable to assume that the EWHs commit for under-frequency response only when the is water-flow rates are very low (almost 95\% of time), and the water temperature in the tank is within the hysteresis deadband. Hence, for simplicity, we assume that, for the participating EWHs, the water-flow rates are zero and the tank water temperatures lie within the hysteresis deadband.


Fig.\,\ref{F:power} shows examples of how the ensemble size and the fractions of EWHs initially `on' affect the time evolution of total power consumption over a window of 15\,min. In Figs.\,\ref{F:10_100}-\ref{F:1000_100}, all the EWHs were initially `on', while the ensemble size was varied from 10 to 1000. Note that as the population size increases, the relative variability in the power consumption (w.r.t. the power consumption at $t\!=\!0$) decreases. Figs.\,\ref{F:50_100}-\ref{F:50_30} illustrate the evolution of total  power starting from varying fractions of EWHs that were initially `on'. When all EWHs were `on' initially, the total power decreases monotonically, but as the fraction of initially `on' EWHs decreases the total power decay rate reduces, even showing signs of increase in Fig.\,\ref{F:50_30}. 

\begin{figure}[thpb]
\centering
\includegraphics[scale=0.35]{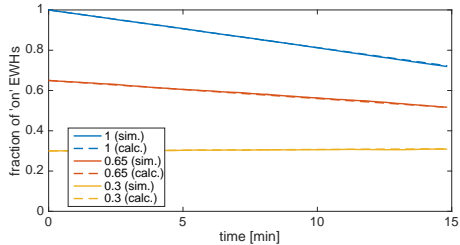}
\caption{Probability of an EWH being `on' at any time during a control window, for varying fractions of EWHs initially `on'. Generated from an ensemble size of 50, although the values are independent of the ensemble size.}
\label{F:probabilities}
\end{figure}

Based on these simulations, we can estimate the probability of an EWH being on at any time instant over the control window. Note from \eqref{E:pon} that the probability of an EWH being `on' at any point in time is a function of the fraction of EWHs that were `on' at the start and the distribution of `on' and `off' time-periods, but does not depend on the ensemble size. Fig.\,\ref{F:probabilities} shows the plot of evolution of the probability of an EWH being `on' ($p_{on}(t)$) at any time during a control window, for varying fractions (e.g. 1,\,0.65,\,0.3) of EWHs that are `on' at the start. The results are generated from an ensemble size of 50, although the curves are independent of the ensemble size (recall \eqref{E:pon}). Moreover, from these plots, we can estimate the values of $\alpha_{on}$ and $\alpha_{off}$\,, by first computing the value of $\alpha_{on}$ from the line corresponding to $p_{on}(0)=1$\,, and then use that value to compute $\alpha_{off}$ from either of the other two lines. Thus we calculate the following values,
\begin{align}\label{E:alpha}
\alpha_{on}=0.019\,\text{min}^{-1},~\alpha_{off}=0.009\,\text{min}^{-1}.
\end{align} 

Finally, we present some plots on the mean (expected) squared relative error, with respect to varying levels of flexibility commitment (towards frequency response) by an ensemble. Fig.\,\ref{F:error} shows the plots of mean squared relative errors at varying levels of commitment, for three different ensemble sizes (and different probabilities of being `on' at the start), computed numerically from 200 instances. From Fig.\,\ref{F:error_10}, we observe that, at an aggressive level of commitment at 100\%, the mean squared relative error monotonically increases from zero (at $t=0$) to close to 10\% towards the end of the control window (at $t=15\,$min). On the other hand, if the load aggregator bids conservatively at 75\%, the error value is high (over 10\%) at the start but monotonically decreases until the end of the control window. The same pattern is also observed in Figs.\,\ref{F:error_100}-\ref{F:error_1000}. Results are also shown for the optimal value of the committed flexibility (Theorem\,\ref{T:optimal}), calculated using the values of $\alpha_{on}$ and $\alpha_{off}$ from \ref{E:alpha}. Both the error values - 1) obtained from the analytical expression in \ref{L:metric} and 2) computed numerically from 200 instances, are shown in Fig.\,\ref{F:error}. We observe that the analytical and numerical values match closely. Further, the maximal error is lowest at the optimal flexibility level, at which level the error values at both ends of the control window are (almost) equal, as predicted in Theorem\,\ref{T:optimal}.

\section{Conclusion}\label{S:concl}
In this paper we discuss a hierarchical control framework whereby a load aggregator managing an ensemble of flexible EWHs commits certain frequency responsive reserve to the grid operator. At the start of a control window, each EWH communicates its state of operation (`on' or `off') to the load aggregator. The load aggregator's task is to estimate an optimal commitment level such that the maximal expected error between actual available reserve and the committed reserve over the control window is minimal. We provide a closed-form expression for the optimal flexibility that the load aggregator should commit for frequency response services. Simulation results are provided to validate the accuracy of the theoretical findings. Future work will focus on extending this analysis to wider class of flexible electrical loads.


\section*{Acknowledgment}

The authors would like to thank the United States Department of Energy for supporting this work under their Grid Modernization Lab Consortium initiative.



\bibliographystyle{IEEEtran}
\bibliography{IEEEabrv,RefList,MyReferences}
%
%
%

\end{document}